\newtheorem{defn}{Definition}[section]
\newtheorem{lemma}[defn]{Lemma}
\newtheorem{theorem}[defn]{Theorem}
\newtheorem{proposition}[defn]{Proposition}
\newtheorem{conj}[defn]{Conjecture}
\theoremstyle{definition}
\newcommand{\Q}{\mathbb Q}
\newcommand{\Z}{\mathbb Z}
\newcommand{\F}{\mathbb F}
\newcommand{\Gal}{\operatorname{Gal}}
\newcommand{\GL}{\operatorname{GL}}
\def\diam#1{\langle#1\rangle}
\begin{document}

% Title, authors and addresses

% use the thanksref command within \title, \author or \address for footnotes;
% use the corauthref command within \author for corresponding author footnotes;
% use the ead command for the email address,
% and the form \ead[url] for the home page:
% \title{Title\thanksref{label1}}
% \thanks[label1]{}
% \author{Name\corauthref{cor1}\thanksref{label2}}
% \ead{email address}
% \ead[url]{home page}
% \thanks[label2]{}
% \corauth[cor1]{}
% \address{Address\thanksref{label3}}
% \thanks[label3]{}

\bibliographystyle{plain}
\title[Isogenies over number fields]{Isogenies of non-CM elliptic curves with rational $j$-invariants over number fields}

\author{Filip Najman}
\address{Department of Mathematics, University of Zagreb, Bijeni\v{c}ka cesta 30, 10000 Zagreb, Croatia}
\email{fnajman@math.hr}
\address{Department of Mathematics, Massachusetts Institute of Technology, Cambridge, Massachusetts 02139, USA}
\email{fnajman@mit.edu}

%\keywords{modular curves}

\subjclass[2010]{11G05.}

% Authors and running title to go on top of each page 
%\pagestyle{myheadings} \markboth{\'Alvaro Lozano-Robledo}{Rank
%over large fields.}

\begin{abstract}
We unconditionally determine $I_\Q(d)$, the set of possible prime degrees of cyclic $K$-isogneies of elliptic curves with $\Q$-rational $j$-invariants and without complex multiplication over number fields $K$ of degree $\leq d$, for $d\leq 7$, and give an upper bound for $I_\Q(d)$ for $d>7$. Assuming Serre's uniformity conjecture, we determine $I_\Q(d)$ exactly for all positive integers $d$.  	
\end{abstract}

\maketitle

%\part{Use this type of header for very long papers only}

\section{Introduction}

Let $E/K$ be an elliptic curve over a number field. If there exists a $K$-rational cyclic isogeny $\phi:E\rightarrow E'$ of degree $n$, this implies that $\ker \phi$ is a $\Gal(\overline K / K)$-invariant cyclic group of order $n$ and we will say that $E/K$ has an \textit{$n$-isogeny}.

When talking about possible isogeny degrees of elliptic curves over number fields, it makes sense to restrict to only elliptic curves without complex multiplication (CM). This is because an elliptic curve $E$ with complex multiplication by an order $\mathcal O$ of an imaginary quadratic field $L$ will have $p$-isogenies for infinitely many primes $p$ over any number field containing $L$. We will restrict to elliptic curves without CM in the whole paper, without further mention.   

Understanding the possible torsion groups and possible degrees of a cyclic isogeny is one of the basic problems in the study of elliptic curves over number fields. After the possible torsion groups \cite{mazur1} and prime degrees of isogenies \cite{mazur2} of elliptic curves over $\Q$ were determined by Mazur, Kenku \cite{kenku2,kenku3,kenku4, kenku5} soon completed the classification of possible degrees (not just of prime order) of isogenies of elliptic curves over $\Q$.        

From then, there has been much progress in understanding the possible torsion groups of elliptic curves over number fields: primes that can divide the order of the torsion of an elliptic curves over number fields of degree $d$ were determined by Kamienny \cite{kamienny} for $d=2$, Parent \cite{parent0,parent} for $d=3$ and bounds for the size of such primes for general $d$ were determined by Merel \cite{merel}.

Unfortunately, there has been much less progress in understanding possible degrees of isogenies. A full list of primes $p$ such that $p$ divides $n$ for some $n$-isogeny of an elliptic curve over a number field of degree $d>1$ is not known, even when one restricts to elliptic curves defined over a single number field $K\neq \Q$. We should mention that, for a fixed number field $K$, Larson and Vaintrob \cite{vailar} recently proved that such a list of possible degrees is finite, assuming the Generalized Riemann Hypothesis.

In this paper, we give a list of primes $I_\Q(d)$ that divide $n$ for some $n$-isogeny of an elliptic curve with $\Q$-rational $j$-invariant without CM over a number field of degree $\leq d$. This can be considered to be an analogue of a similar result of Lozano-Robledo \cite{lozano1} for the torsion, and in fact we will use similar methods as in that paper. 

We should note that when studying $p$-isogenies one can look at the set of elliptic curves with rational $j$-invariant instead of the set of elliptic curves with coefficients from $\Q$. The latter set has density 0 in the former over any number field $\neq \Q$ and using any sensible ordering. We can study just the $j$-invariants because a $p$-isogeny is a quadratic--twist-invariant property, while having $p$-torsion is not (except when $p=2$). In other words the set of elliptic curves with a $p$-isogeny is a coarse moduli space, while the set of elliptic curves with $p$-torsion (for $p> 3$) is a fine moduli space. 

By the aforementioned result of Mazur \cite{mazur2} we know that
$$I_\Q(1)=\{2,3,5,7,11,13,17,37\}.$$
Note that by definition $I_\Q(1)\subseteq I_\Q(d)$ for all $d\geq 1$.

We prove the following result.
\begin{theorem}
\label{maintm}
$I_\Q(d)=I_\Q(1)$ for all $d\leq 7$.
\end{theorem}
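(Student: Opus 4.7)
The plan is to translate the isogeny condition into a statement about the mod-$p$ Galois representation of a fixed elliptic curve over $\Q$, and then classify the possible images and check orbit sizes on $\PP^1(\F_p)$.

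First I reduce to Galois representations. Since $j(E) \in \Q$, $E$ is a twist (of degree dividing $6$) of some $E_0/\Q$ with $j(E_0) = j(E)$, and twisting modifies $\bar\rho_{E_0,p}$ only by a scalar character, which acts trivially on $\PP^1(\F_p)$. Hence $E/K$ admits a $K$-rational $p$-isogeny if and only if $\bar\rho_{E_0,p}(G_K)$ fixes a point on $\PP^1(\F_p)$. Taking $K$ of minimal degree, $p \in I_\Q(d)$ is equivalent to the statement that some non-CM $E_0/\Q$ produces an image $G := \bar\rho_{E_0,p}(G_\Q) \subseteq \GL_2(\F_p)$ with some $G$-orbit on $\PP^1(\F_p)$ of size $\le d$. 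So the theorem reduces to showing that for every prime $p \notin I_\Q(1)$ (hence $p \ge 19$) and every non-CM $E_0/\Q$, every $G$-orbit on $\PP^1(\F_p)$ has size $\ge 8$.

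Next I would restrict the possible $G$. Mazur's theorem on rational isogenies forbids $G$ being contained in a Borel subgroup when $p \notin I_\Q(1)$. The theorems of Bilu--Parent and Bilu--Parent--Rebolledo rule out $G$ contained in the normalizer of a split Cartan for $p \ge 11$. Serre's classification of maximal subgroups together with the elimination of exceptional projective images ($A_4, S_4, A_5$) for $p \ge 17$ leaves two possibilities: $G$ contains $\SL_2(\F_p)$, or $G$ is contained in the normalizer $N$ of a non-split Cartan subgroup $C \subset \GL_2(\F_p)$. In the first case $G$ acts transitively on $\PP^1(\F_p)$, so the unique orbit has size $p+1 \ge 20 > 7$.

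\emph{The hard part} is the second case. The full $N$ is itself transitive on $\PP^1(\F_p)$ (because $C$ is, having no $\F_p$-rational eigenlines), so the difficulty concerns proper subgroups $G \subsetneq N$. The projective image $\bar G \subseteq \bar N \cong D_{p+1}$ must be non-cyclic, since cyclic $\bar G$ forces $G \subseteq C$ to be abelian, which is excluded for non-CM $E_0/\Q$ at $p \ge 19$ by Serre's open image theorem together with the classification of non-surjective mod-$p$ images at small primes. Thus $\bar G = D_n$ for some divisor $n \ge 2$ of $p+1$, and $D_n$-orbits on $\PP^1(\F_p)$ have size $n$ or $2n$. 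I would then show $n \ge 8$: the surjectivity of $\det \circ \bar\rho_{E_0,p}$ (the mod-$p$ cyclotomic character) implies $(p-1) \mid 2n \cdot |G \cap \F_p^*|$, and combining this with $n \mid (p+1)$ rules out $n \le 7$ whenever $p \ge 41$. For the residual primes $p \in \{19, 23, 29, 31\}$ the determinant argument alone is insufficient, and I would appeal to Zywina's explicit classification of mod-$p$ Galois images for non-CM elliptic curves over $\Q$, checking case-by-case that no admissible image admits an orbit of size $\le 7$ on $\PP^1(\F_p)$.

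The main obstacle is precisely this last step: ruling out unconditionally that a non-CM $E_0/\Q$ can have $\bar\rho_{E_0,p}$-image a proper subgroup of the non-split Cartan normalizer with small-dihedral projective image, at the small primes $19, 23, 29, 31$. This is where one must appeal either to detailed analysis of the intermediate modular curves $X_H(p)$ with $H \subsetneq N$, or to a precomputed classification of non-surjective mod-$p$ images; the generic arguments using Mazur, Bilu--Parent--Rebolledo, and the determinant constraint handle the remaining primes cleanly.
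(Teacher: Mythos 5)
Your overall strategy is the same as the paper's: reduce to the mod-$p$ image $G=\rho_{E_0,p}(G_\Q)$, observe that the minimal degree of a field of definition of a $p$-isogeny is the minimal $G$-orbit size on $\PP^1(\F_p)$ (equivalently, the minimal $[\Q(C):\Q]$), use Mazur and Bilu--Parent(--Rebolledo) to reduce to the two cases ``$G$ surjective'' and ``$G$ inside the normalizer $N$ of a non-split Cartan,'' and dispose of the surjective case by the index of a Borel. You also correctly isolate the hard case. However, your proposed resolution of that case for $p\ge 41$ contains a genuine error. The determinant constraint you state, $(p-1)\mid 2n\cdot|G\cap\F_p^\times|$, is vacuous whenever $G$ contains the scalars: taking $|G\cap\F_p^\times|=p-1$ satisfies the divisibility for every $n\ge 1$. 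Concretely, the subgroup $G=\F_p^\times\cup\F_p^\times w$ with $w$ a reflection of non-square determinant has surjective determinant, lies in $N$, has projective image of order $2$, and has orbits of size $\le 2$ on $\PP^1(\F_p)$; nothing in your argument excludes it, for any $p$. So the determinant argument does not ``rule out $n\le 7$ whenever $p\ge 41$,'' and the case split at $41$ is illusory: a group-theoretic constraint from $\det=\chi_p$ alone can never bound the projective image from below.

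The missing input is arithmetic, not group-theoretic, and it is exactly what the paper invokes: by Serre/Bilu--Parent--Rebolledo (Theorem \ref{brp}), $G$ must contain $\mathcal C_{ns}^{\,e}$ with $e\le 6$ coming from the image of inertia at $p$, and by Zywina's Proposition 1.13 (Proposition \ref{prop-zyw}) the image is in fact conjugate to $\mathcal C_{ns}^+$ when $p\equiv 1\pmod 3$ and to $\mathcal C_{ns}^+$ or $(\mathcal C_{ns}^+)^3$ when $p\equiv 2\pmod 3$. This forces your $n$ to be $p+1$ or $(p+1)/3$, the latter only for $p\equiv 2\pmod 3$, hence $p\ge 23$ and $n\ge 8$; that is the whole proof. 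Since you already propose appealing to Zywina's classification for $p\in\{19,23,29,31\}$, the repair is simply to make that appeal uniformly for all $p\notin I_\Q(1)$ and delete the determinant step; with that change your argument becomes correct and essentially coincides with the paper's (which phrases the orbit computation via Lozano-Robledo's lower bound on $[\Q(P):\Q]$ together with $[\Q(P):\Q(C)]\mid p-1$, rather than directly on $\PP^1(\F_p)$).
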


We also give an unconditional upper bound on $I_\Q(d)$ for all positive integers $d$ in Theorem \ref{tm_up_bound}.

In Section 4, we describe $I_\Q(d)$ for all positive integers $d$, under the assumption that Serre's uniformity conjecture (see Conjecture \ref{suc}) is true. 

\section{Preliminaries: Galois representations} 

Studying both the torsion and isogenies of elliptic curves can be viewed as a more general problem of studying their Galois representations. Let $E[n]=\{ P\in E(\overline \Q )| nP=0 \}$ denote the $n$-th division group of $E$ over $\overline \Q$ and let $\Q(E[n])$, the field obtained by adjoining the coordinates of all points in $E[n]$, be the $n$-th division field of $E$. The Galois group $G_\Q=\Gal (\overline \Q /\Q)$ acts on $E[n]$ and gives rise to a homomorphism $$\rho_{E,n}: G_\Q \hookrightarrow \GL_2 (\Z/n\Z)$$
called a \emph{mod $n$ Galois representation}. The composition of the determinant map and $\rho_n$ is the \emph{cyclotomic character} $\chi_n$.  For a number field $K$, $E(K)[n]$ denotes the set of $K$-rational points in $E[n]$.

Let $p$ be a prime and $\epsilon$ a fixed quadratic non-residue of $\F_p$. Following \cite{lozano1}, we define
$$\mathcal C_{ns}=\left\{\begin{pmatrix}
a & \epsilon b \\ b & a
\end{pmatrix}: a,b \in \F_p, (a,b)\not\equiv (0,0) \pmod p \right\}$$
to denote the non-split Cartan subgroup of $\GL_2(\F_p)$. Futhermore, we define
$$M(a,b):=\left\{\begin{pmatrix}
	a & \epsilon b \\ b & a
\end{pmatrix}: a,b \in \F_p, (a,b)\not\equiv (0,0) \pmod p \right\},$$
$$ N(c,d):=\left\{\begin{pmatrix}
c & \epsilon d \\ -d & -c
\end{pmatrix}: c,d \in \F_p, (c,d)\not\equiv (0,0) \pmod p \right\}$$
and 
$$\mathcal C_{ns}^+=\left\{M(a,b), N(c,d), a,b,c,d\in \F_p, (a,b),(c,d) \not \equiv (0,0) \pmod p\right\}$$
be the normalizer of the non-Split Cartan subgroup.

Let $E/ \Q$ be an elliptic curve and $p\geq 5$ a prime, and let $K$ be an extension of $\Q_p$ of the least possible degree such that $E$ has good or multiplicative reduction over $K$. Let $e$ be the ramification index of $K$ over $\Q_p$; it is well known that $e \leq 6$ \cite{serre1}.

\begin{theorem}[\cite{bilu, pbr, mazur2, serre1}]
\label{brp}
Let $p\notin I_\Q(1)$ be a prime and $e$ be the ramification index of $K/ \Q_p$, as defined above. 
Then the image $G$ of $\rho_{E,p}(G_\Q)$ is either 
\begin{enumerate}

\item Contained in the normalizer of a non-split Cartan subgroup: then $G$ contains the $e$-the power of a non-split Cartan subgroup, or

\item Surjective, i.e. $G=\GL_2(\F_p)$. 
\end{enumerate}
\end{theorem}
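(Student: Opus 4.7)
The plan is to combine Dickson's classification of subgroups of $\GL_2(\F_p)$ with three prior inputs (Mazur's theorem for isogenies over $\Q$, Bilu--Parent(--Rebolledo) for the split Cartan case, and Serre for the exceptional case) to reduce to the non-split Cartan normalizer possibility, and then to exploit the action of inertia at $p$ to extract the containment $\mathcal{C}_{ns}^e \subseteq G$.

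First, I would recall Dickson's theorem: if $G$ is a proper subgroup of $\GL_2(\F_p)$ for $p \geq 5$, then $G$ sits inside one of the following maximal subgroups of $\GL_2(\F_p)$: a Borel; the normalizer of a split Cartan; the normalizer of a non-split Cartan; or an \emph{exceptional} subgroup whose image in $\PGL_2(\F_p)$ is isomorphic to $A_4$, $S_4$, or $A_5$. Assuming $G \neq \GL_2(\F_p)$, I would dispatch the non-(non-split Cartan normalizer) possibilities one at a time. If $G$ lies in a Borel, then $E$ admits a $\Q$-rational cyclic $p$-isogeny, which by Mazur \cite{mazur2} forces $p \in I_\Q(1)$, contradicting the hypothesis. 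If $G$ lies in the normalizer of a split Cartan, then $E$ gives a non-cuspidal, non-CM $\Q$-point on $X_{\mathrm{sp}}^+(p)$; by Bilu--Parent \cite{bilu} (completed for the borderline prime $p=13$ by Bilu--Parent--Rebolledo \cite{pbr}), this cannot occur for $p \notin I_\Q(1)$. Finally, if $G$ is exceptional, then Serre \cite{serre1} shows $p \leq 13$, again a contradiction. Thus $G$ must be either surjective or contained in the normalizer of a non-split Cartan.

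For the refinement in case (1), I would restrict $\rho_{E,p}$ to a decomposition subgroup at $p$. Fixing an embedding $\overline{\Q} \hookrightarrow \overline{\Q_p}$ gives $\rho_{E,p}(I_{\Q_p}) \subseteq G \subseteq \mathcal{C}_{ns}^+$. The image of inertia cannot lie in a Borel, because for $p \geq 5$ the intersection of a Borel subgroup with $\mathcal{C}_{ns}^+$ consists essentially of scalar matrices, too small to house a nontrivial inertia action on $E[p]$; this rules out potentially multiplicative and potentially good ordinary reduction. Hence $E$ has potentially good supersingular reduction at $p$, realized precisely over the minimal extension $K/\Q_p$ of ramification index $e$. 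Serre's analysis \cite{serre1} of the inertia action in this regime, via the level-$2$ fundamental characters of $\Q_p$, identifies $\rho_{E,p}(I_{\Q_p})$ with the $e$-th power subgroup of a non-split Cartan, so the inclusion $G \supseteq \rho_{E,p}(I_{\Q_p})$ yields $\mathcal{C}_{ns}^e \subseteq G$. The main technical obstacle is precisely this last local step: pinning down the image of $I_{\Q_p}$ as exactly $\mathcal{C}_{ns}^e$ (rather than a larger cyclic subgroup of $\mathcal{C}_{ns}^+$) requires careful bookkeeping with the fundamental characters of level $2$ at both $\Q_p$ and $K$, relating the exponents appearing in those characters to the minimal ramification index $e$ through Serre's classification of potentially good reduction types.
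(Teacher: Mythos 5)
The paper offers no proof of this theorem --- it is stated as a compilation of the cited results --- and your argument assembles exactly the intended ingredients in exactly the intended roles: Dickson's classification, Mazur \cite{mazur2} for the Borel case, Bilu--Parent(--Rebolledo) \cite{bilu, pbr} for the split Cartan normalizer, and Serre \cite{serre1} both for the exceptional images and for the inertia analysis via level-$2$ fundamental characters that yields $\mathcal{C}_{ns}^e \subseteq G$ in the potentially supersingular case. The only point worth tightening is the claim that a Borel meets $\mathcal{C}_{ns}^+$ essentially in scalars: the nontrivial coset contributes trace-zero elements with eigenvalues $\pm\lambda$ in $\F_p$, but any subgroup of $\mathcal{C}_{ns}^+$ contained in a Borel still has projective image of order at most $2$, which suffices to exclude potentially ordinary and potentially multiplicative reduction since there the inertia image has an element whose eigenvalue ratio has order at least $(p-1)/6>2$ for $p\geq 19$.
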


In fact, Zywina \cite{zyw} recently proved an even more precise result of what the image of $\rho_{E,p}$ looks like if $p\notin I_\Q(1)$.

\begin{proposition}[\cite{zyw}, Proposition 1.13.]
\label{prop-zyw}
Suppose $E/\Q$, $p\notin I_\Q(1)$ and $\rho_{E,p}$ is not surjective. Then
\begin{enumerate}
\item If $p\equiv 1 \pmod 3$, then $\rho_{E,p}(G_\Q)$ is conjugate in $\GL_2(\F_p)$ to $\mathcal C_{ns}^+$.
\item If $p\equiv 2 \pmod 3$, then $\rho_{E,p}(G_\Q)$ is conjugate in $\GL_2(\F_p)$ to either $\mathcal C_{ns}^+$ or $(\mathcal C_{ns}^+)^3.$
\end{enumerate}
\end{proposition}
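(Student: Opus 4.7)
The plan is to sharpen Theorem \ref{brp} by combining its conclusion with two universal constraints on the image of $\rho_{E,p}$ that hold for any elliptic curve over $\Q$: the shape of $\det\circ\rho_{E,p}$ and the shape of $\rho_{E,p}(c)$ for complex conjugation $c\in G_\Q$. By Theorem \ref{brp} and the hypotheses that $p\notin I_\Q(1)$ and $\rho_{E,p}$ is not surjective, we may conjugate inside $\GL_2(\F_p)$ so that $G:=\rho_{E,p}(G_\Q)\subseteq \mathcal C_{ns}^+$. Since $\Q\cap\mu_p=\{1\}$ and $\det\circ\rho_{E,p}$ is the mod $p$ cyclotomic character, $\det(G)=\F_p^\times$.

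The next step is to show that $G$ is not contained in $\mathcal C_{ns}$ itself. Complex conjugation $\rho_{E,p}(c)$ has eigenvalues $\pm 1$, hence trace $0$ and determinant $-1$. For $M(a,b)\in\mathcal C_{ns}$, the trace is $2a$, so trace zero forces $a=0$, and then $\det M(0,b)=-\epsilon b^2$; this can equal $-1$ only if $\epsilon$ is a square in $\F_p$, contradicting the choice of $\epsilon$. Hence $G$ meets the non-identity coset $\mathcal C_{ns}^+\setminus\mathcal C_{ns}$, so $G$ is non-abelian and of the form $H_0\sqcup N\cdot H_0$ where $H_0:=G\cap\mathcal C_{ns}$ and $N\in\mathcal C_{ns}^+\setminus\mathcal C_{ns}$.

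The main combinatorial task is to classify subgroups $H\le\mathcal C_{ns}^+$ of this shape with $\det(H)=\F_p^\times$. Since $\mathcal C_{ns}\cong\F_{p^2}^\times$ is cyclic of order $p^2-1$, the intersection $H\cap\mathcal C_{ns}$ is the unique cyclic subgroup $C_d$ of some order $d\mid p^2-1$, and $\det$ restricted to $\mathcal C_{ns}$ is the norm $\F_{p^2}^\times\to\F_p^\times$ (the $(p+1)$-power map), so $\det(C_d)$ has index $\gcd(d,p+1)/\gcd(d,p+1)\cdot\ldots$—explicitly, order $d/\gcd(d,p+1)$ in $\F_p^\times$. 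Adding the contribution of the $N$-coset and imposing $\det(H)=\F_p^\times$ eliminates all but two shapes for $H$: the full normalizer $\mathcal C_{ns}^+$, and the index-$3$ subgroup $(\mathcal C_{ns}^+)^3$ of cubes. The latter retains surjective determinant precisely when cubing is a bijection on $\F_p^\times$, i.e.\ when $3\nmid p-1$, which is exactly the congruence $p\equiv 2\pmod 3$; this is what produces the dichotomy in the statement.

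The step I expect to be the main obstacle is not the group-theoretic reduction above, but rather eliminating all remaining intermediate candidates $H$ strictly between $(\mathcal C_{ns}^+)^3$ and $\mathcal C_{ns}^+$ (and analogous intermediate subgroups) by appealing to the $\Q$-rational points on the corresponding modular covers of $X_{\mathrm{ns}}^+(p)$. Showing that each such cover has only CM or cuspidal rational points for $p\notin I_\Q(1)$ is precisely the Diophantine heart of the matter; it requires leveraging the methods underpinning Theorem \ref{brp} itself (following Mazur, Bilu--Parent, and Bilu--Parent--Rebolledo) together with a refinement that uses the cubic twist structure to distinguish $\mathcal C_{ns}^+$ from its cube subgroup. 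The combinatorial classification funnels the problem to a short, explicit list; the hard analytic content is finishing off that list on the modular-curve side.
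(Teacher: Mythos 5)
This proposition is not proved in the paper at all: it is quoted verbatim from Zywina, so there is no in-paper argument to compare against. Judged on its own terms, your sketch has a genuine gap at its central step. The claim that imposing $\det(G)=\F_p^\times$ together with the existence of $\rho_{E,p}(c)$ (trace $0$, determinant $-1$) ``eliminates all but two shapes for $H$'' is false as group theory. Concretely, for $p\equiv 3\pmod 4$ take $H_1=\mathcal C_{ns}^2\sqcup N(1,0)\,\mathcal C_{ns}^2$, the index-$2$ subgroup of $\mathcal C_{ns}^+$ whose Cartan part is the squares. Every element of $\F_p^\times$ is a square in $\F_{p^2}^\times$, so $-I\in\mathcal C_{ns}^2$; one checks $N(1,0)^2=I$, so $H_1$ is a subgroup; $N(1,0)$ has trace $0$ and determinant $-1$; and $\det(H_1)=(\F_p^\times)^2\cup(-1)(\F_p^\times)^2=\F_p^\times$ since $-1$ is a nonsquare. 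So $H_1$ survives all of your constraints yet is neither $\mathcal C_{ns}^+$ nor $(\mathcal C_{ns}^+)^3$. (The intermediate formula ``$\gcd(d,p+1)/\gcd(d,p+1)\cdots$'' is also garbled, but the problem is the conclusion drawn from it, not the bookkeeping.)

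The missing idea is local at $p$, not Diophantine. The actual mechanism (Serre, as used by Zywina) is: the image of the inertia subgroup $I_p$ is, up to conjugacy, $\mathcal C_{ns}^{e}$ with $e\in\{1,2,3,4,6\}$ (this is the content of part (1) of Theorem \ref{brp}), and --- crucially --- $\det\circ\rho_{E,p}$ is already surjective \emph{on inertia at $p$}, because $\Q(\zeta_p)/\Q$ is totally ramified at $p$. Hence $(\F_p^\times)^e=\det(\mathcal C_{ns}^e)=\F_p^\times$, forcing $\gcd(e,p-1)=1$, i.e.\ $e\in\{1,3\}$ with $e=3$ possible only when $3\nmid p-1$, i.e.\ $p\equiv 2\pmod 3$. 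Since $G\supseteq\mathcal C_{ns}^e$ and $G\not\subseteq\mathcal C_{ns}$ (your complex-conjugation argument, which is correct), $G$ contains an index-$e$ subgroup of $\mathcal C_{ns}^+$ meeting the outer coset, and as $e\in\{1,3\}$ with $3$ prime there is nothing strictly between; this yields exactly the stated dichotomy and kills $H_1$ above. By contrast, your proposed fallback --- classifying rational points on modular covers of $X_{\mathrm{ns}}^+(p)$ --- is not available: those rational points are unknown in general (this is precisely the open non-split case of Serre's uniformity problem), and no such input is needed once the inertia constraint is used.
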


Serre's Open image theorem \cite{serre1} implies that for an elliptic curve $E/ \Q$ without CM, for all but finitely many primes $p$, $\rho_{E,p}$ is surjective. 

We should note that there does not exist one known elliptic curve $E / \Q$ such that for a prime $p\notin I_\Q(1)$, the representation $\rho_{E,p}$ is not surjective. Sutherland \cite{sut} has recently checked this for all elliptic curves \cite{cre} (over $2$ million of them) with conductor up to $350000$ and all elliptic curves in the Stein-Watkins database (more than 140 million curves).

These observations gives rise to Serre's Uniformity conjecture states that there should exist a bound $B$, not depending on the elliptic curve $E$, such that $\rho_{E,p}$ is surjective for all $p> B$ and for all elliptic curves over $\Q$. Here we state the following version of this conjecture. 

\begin{conj}[Serre's uniformity conjecture, see \cite{zyw}, Conjecture 1.12.]
\label{suc}
For $E/\Q$, $p\notin I_\Q(1)$, the representation $\rho_{E,p}$ is surjective. 
\end{conj}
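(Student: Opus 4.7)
The plan is to reduce the conjecture to a statement about rational points on a single modular curve and then attempt to apply the machinery of Mazur's method and its descendants. By Proposition \ref{prop-zyw}, if $p \notin I_\Q(1)$ and $\rho_{E,p}$ is not surjective, then up to conjugation in $\GL_2(\F_p)$ the image $\rho_{E,p}(G_\Q)$ is contained in the normalizer $\mathcal{C}_{ns}^+$ of a non-split Cartan subgroup. Equivalently, the pair $(E,H)$, for a suitable flag $H \subset E[p]$ stabilized by $\mathcal{C}_{ns}^+$, defines a non-cuspidal rational point on the modular curve $X_{ns}^+(p) = X(p)/\mathcal{C}_{ns}^+$. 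So the conjecture reduces to showing that, for $p$ sufficiently large, every $\Q$-rational point of $X_{ns}^+(p)$ is either a cusp or corresponds to a CM elliptic curve, and then ruling out the sporadic CM cases by relating the CM field to $p$.

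First I would attempt Mazur's method on the Jacobian $J_{ns}^+(p)$: pick an auxiliary prime $\ell \neq p$ of good reduction, find (if possible) a quotient abelian variety of $J_{ns}^+(p)$ whose Mordell--Weil group over $\Q$ has rank $0$, and then exploit a formal immersion at $\ell$ into that quotient to force any putative rational point to coincide with a cusp modulo $\ell$ and hence globally. As a parallel approach I would try Runge's method, which in the split Cartan setting of Bilu--Parent--Rebolledo leverages the presence of many cusps defined over a small extension to bound integral points via the theory of heights on modular units; the analogous unit groups on $X_{ns}^+(p)$ would need to be computed using theta series attached to non-split Cartan data.

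For primes that escape a uniform approach, the fallback would be the quadratic Chabauty strategy of Balakrishnan--Dogra--M\"uller--Tuitman--Vonk, which resolved $p=13$: construct $p$-adic heights on $J_{ns}^+(p)$, use a positive rank but not maximal rank condition to cut out a locally analytic function on $X_{ns}^+(p)(\Q_p)$ whose zero set is finite and contains $X_{ns}^+(p)(\Q)$, and then identify each zero explicitly with a cusp or a CM $j$-invariant. This is by nature prime-by-prime and requires nontrivial computation of the Nekov\'a\v{r} height and of a basis of differentials on a model of $X_{ns}^+(p)$.

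The main obstacle, and the reason the conjecture remains open in general, is precisely the non-split Cartan normalizer case: unlike in the Borel situation, there is no cuspidal subgroup of $J_{ns}^+(p)$ to provide a ready-made finite-rank quotient, and no uniform upper bound on $\operatorname{rank}_\Z J_{ns}^+(p)(\Q)$ is known. Any complete proof would have to supply, in a way uniform in $p$, either a winding-quotient-like finite-rank factor of $J_{ns}^+(p)$ or a refinement of quadratic (or higher) Chabauty that controls $X_{ns}^+(p)(\Q)$ once $p$ is larger than an explicit effective bound; pending such an input, my proposal would handle the cases amenable to current methods and state the rest as what remains to check.
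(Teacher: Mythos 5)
There is nothing here to verify against the paper, because the statement you were asked to prove is Conjecture \ref{suc} --- Serre's uniformity conjecture --- which the paper does not prove and does not claim to prove; it is stated (following Zywina) and then \emph{assumed} in Section 4 to deduce the conditional description of $I_\Q(d)$. So the relevant question is whether your proposal actually closes the conjecture, and by your own admission it does not.

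Your reduction is correct as far as it goes: by Proposition \ref{prop-zyw} (equivalently Theorem \ref{brp}), a non-surjective $\rho_{E,p}$ for $p\notin I_\Q(1)$ forces the image into $\mathcal C_{ns}^+$ (or $(\mathcal C_{ns}^+)^3$, which still lies in $\mathcal C_{ns}^+$), so the conjecture is equivalent to showing that $X_{ns}^+(p)(\Q)$ consists only of cusps and CM points for all such $p$. But every subsequent step is conditional on an input you do not supply and that is precisely the known obstruction: Mazur's formal-immersion strategy needs a rank-zero (winding-type) quotient of $J_{ns}^+(p)$, and no such quotient is known to exist uniformly --- indeed the analytic rank of $J_{ns}^+(p)$ is expected to grow, which is why the Bilu--Parent(--Rebolledo) results handle only the \emph{split} Cartan case, where Runge's method applies thanks to the rational cusps; $X_{ns}^+(p)$ has too few cusps of low degree for Runge to work as stated. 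Quadratic Chabauty, as you note, is prime-by-prime ($p=13$ being the celebrated case) and gives nothing uniform in $p$. A proof sketch whose final paragraph says ``pending such an input, state the rest as what remains to check'' is a research program, not a proof; the gap is the entire theorem. The honest conclusion, consistent with the paper, is that the statement must remain a conjecture, and any argument relying on it (as in Section 4 of the paper) must be flagged as conditional.
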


\section{Degree of the field of definition of a $p$-isogeny}

To prove Theorem \ref{maintm}, we will need to find the minimal degree of definition of a $p$-isogeny of an elliptic curve with $\Q$-rational $j$-invariant. By Theorem \ref{brp} and Proposition \ref{prop-zyw}, we need to consider 2 cases: either $\rho_{E,p}$ is surjective or its image is surjective or is contained in a normalizer of non-split Cartan subgroup.

Let $P\in E[p]$ be a point of degree $p$ and $C=\diam{P}$ be the subgroup generated by $P$. For a number field $K$, we define $K(P)$ to be the field obtained by adjoining the coordinates of $P$ to $K$ and $K(C)$ to be smallest extension of $K$ such that the $p$-isogeny $\phi$ with kernel $C$ is defined over $K$, or in other words, the smallest number field such that $\Gal(\overline{K(C)}/K(C))$ acts on $C$.

\subsection{Full image}
\begin{proposition}
	Let $E/\Q$ be an elliptic curve and $p$ a prime such that $\rho_{E,p}$ is surjective, and $C$ of $E[p]$ of order $p$. Then $[\Q(C):\Q]=p+1$.  
\label{full}
\end{proposition}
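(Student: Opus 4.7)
The plan is to translate the question about the field of definition $\Q(C)$ into a group-theoretic index computation via the mod $p$ Galois representation. The cyclic subgroup $C\subset E[p]$ of order $p$ is a line in the two-dimensional $\F_p$-vector space $E[p]$, and the extension $\Q(C)/\Q$ is by definition the fixed field in $\Q(E[p])$ of the stabilizer of $C$ under the Galois action, i.e.\ of the subgroup
\[
H=\{\sigma\in G_\Q : \rho_{E,p}(\sigma)(C)=C\}.
\]
By Galois theory, $[\Q(C):\Q]$ equals the index $[G_\Q:H]$, which, since $\rho_{E,p}$ is surjective, equals the index in $\GL_2(\F_p)$ of the stabilizer of a line in $\F_p^2$.

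Next I would identify this stabilizer: after choosing a basis of $E[p]$ whose first vector spans $C$, the stabilizer of $C$ is precisely the Borel subgroup $B$ of upper-triangular matrices in $\GL_2(\F_p)$. A direct count gives
\[
|B| = (p-1)^2 \, p \quad\text{and}\quad |\GL_2(\F_p)|=(p^2-1)(p^2-p),
\]
so $[\GL_2(\F_p):B]=p+1$. Equivalently, the cosets of $B$ are in bijection with the lines of $\F_p^2$, i.e.\ with $\mathbb{P}^1(\F_p)$, which has $p+1$ points.

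Combining these two observations yields $[\Q(C):\Q]=p+1$, as required. There is no substantial obstacle here; the only care needed is to justify that the image of $H$ in $\GL_2(\F_p)$ really is the full stabilizer of $C$ (which follows from surjectivity of $\rho_{E,p}$) and that $\Q(C)$ as defined in the paper coincides with the fixed field of $H$ inside $\Q(E[p])$.
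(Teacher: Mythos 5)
Your proof is correct and follows essentially the same route as the paper: both identify $\Q(C)$ as the fixed field of the stabilizer of the line $C$, which under surjectivity of $\rho_{E,p}$ is the Borel subgroup of upper-triangular matrices, and compute its index in $\GL_2(\F_p)$ to be $p+1$. Your version is slightly more careful in spelling out why the stabilizer is the full Borel and why $\Q(C)$ is the corresponding fixed field, but the substance is identical.
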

\begin{proof}
	Let $\{P,R\}$ be the basis of $E[P]$. The field of definition of $\Q(C)$ is then the fixed field of the subgroup
	$$
	H=\left\{ \begin{pmatrix}
		a & b\\ 0&c
		\end{pmatrix}: a,c \in \F_p^\times, b \in \F_p \right\}.
	$$ 
	We have $|\GL_2(\F_p)|=p(p-1)^2(p+1)$ and $|H|=p(p-1)^2$, so we conclude that 
	$$[\Q(\diam{P}):\Q]=|\GL_2(\F_p)/H|=p+1.$$
\end{proof}

\subsection{Normalizer of non-split Cartan}
\label{sec:nsplit}

A result that we will need is the following easy lemma.
	
\begin{lemma}
\label{ent}
Let $E/K$ be an elliptic curve over a number field and $P\in E[p]$. Let $C=\diam{P}$. Then $[K(P):K(C)]$ divides $p-1$.
\end{lemma}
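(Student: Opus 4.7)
The plan is to interpret both fields as fixed fields of natural subgroups of $\Gal(\overline{K}/K)$ and then apply the fundamental theorem of Galois theory.

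First, I would unpack the definitions. By the definition of $K(C)$, the absolute Galois group $G := \Gal(\overline{K}/K(C))$ stabilizes the cyclic subgroup $C = \langle P \rangle$ of $E[p]$. Because $C$ has order $p$, its automorphism group is $(\Z/p\Z)^\times \cong \F_p^\times$, so the action of $G$ on $C$ is given by a character
$$\chi: G \longrightarrow \F_p^\times,$$
determined by $\sigma(P) = \chi(\sigma) \cdot P$ for $\sigma \in G$.

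Next, I would identify $K(P)$ inside this picture. An element $\sigma \in G$ fixes $P$ if and only if $\chi(\sigma) = 1$, since $P$ generates $C$. Therefore the subgroup of $G$ fixing $K(P)$ pointwise is precisely $\ker \chi$. Because $K(P)/K(C)$ is a subextension of $\overline{K}/K(C)$, Galois theory (applied to the algebraic extension $K(P)/K(C)$, which is finite and separable) gives
$$[K(P):K(C)] = [G : \ker \chi] = |\operatorname{Im}(\chi)|.$$
Since $\operatorname{Im}(\chi)$ is a subgroup of $\F_p^\times$, its order divides $p-1$, which is exactly the claim.

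There is essentially no obstacle here: the statement is a direct consequence of the fact that the quotient $\mathrm{GL}_1(\F_p) = \F_p^\times$ acting on a cyclic group of order $p$ is cyclic of order $p-1$. The only small point worth being careful about is that $K(C)$ is by definition the smallest field over which $\mathrm{Gal}(\overline{K}/K(C))$ preserves $C$ as a set, which is exactly what is needed to make $\chi$ well-defined.
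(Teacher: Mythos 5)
Your argument is correct and is exactly the standard isogeny-character computation: the action of $\Gal(\overline{K}/K(C))$ on the order-$p$ cyclic group $C$ gives a character $\chi$ into $\F_p^\times$ whose kernel cuts out $K(P)$, so the degree divides $p-1$. The paper proves the lemma simply by citing \cite[Lemma 7]{ent}, which runs this same computation with a different base field, so your write-up is essentially the paper's proof made self-contained; the only implicit point worth flagging is the (easy) inclusion $K(C)\subseteq K(P)$, which justifies treating $K(P)/K(C)$ as an extension at all.
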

\begin{proof} By definition $E$ has a $p$-isogeny over $K(C)$. Then the same proof as \cite[Lemma 7]{ent}, taking $K(P)$ instead of $\Q$ as the base field (which does not make a difference in the proof) proves the claim.
\end{proof}

Now we can prove our result. 

\begin{proposition}
	Let $E/\Q$ be an elliptic curve and $p$ a prime such that the image of $\rho_{E,p}$ is contained in the normalizer of the non-split Cartan subgroup and let $\diam{P}=C\subseteq E[p]$ a cyclic subgroup of order $p$. Then 
\begin{enumerate}

\item If $p\equiv 2 \pmod 3$, then  $[\Q(C):\Q]\geq p+1$.

\item If $p\equiv 2 \pmod 3$, then  $[\Q(C):\Q]\geq (p+1)/3$.

\item If $E$ does not have additive reduction at $p$, then  $[\Q(C):\Q]\geq p+1$.

\end{enumerate}	
\label{nsp}
\end{proposition}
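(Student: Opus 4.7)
The plan is to translate $[\Q(C):\Q]$ into the size of a group-theoretic orbit. The field $\Q(C)$ is the fixed field of the stabilizer of the line $C \subseteq E[p]$ inside $G_\Q$, so writing $G := \rho_{E,p}(G_\Q)$, the quantity $[\Q(C):\Q]$ equals the size of the $G$-orbit of $C$ under the action of $G$ on the set of cyclic subgroups of order $p$ in $E[p]$, which is $\PP^1(\F_p)$ and has cardinality $p+1$. Because $G$ is assumed contained in the normalizer of a non-split Cartan, Proposition \ref{prop-zyw} tells us that, up to conjugation, $G$ is either the full normalizer $\mathcal{C}_{ns}^+$, or (only in the case $p \equiv 2 \pmod 3$) the index-$3$ subgroup $(\mathcal{C}_{ns}^+)^3$.

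The engine of the whole proof is the observation that $\mathcal{C}_{ns}$ itself already acts transitively on $\PP^1(\F_p)$. I will verify this from the definition of \emph{non-split}: every non-scalar $M(a,b) \in \mathcal{C}_{ns}$ (i.e.\ with $b \ne 0$) has characteristic polynomial of discriminant $4\epsilon b^2$, a non-square in $\F_p$, so its eigenvalues live in $\F_{p^2} \setminus \F_p$ and it fixes no $\F_p$-rational line; meanwhile the scalars $\F_p^\times \subseteq \mathcal{C}_{ns}$ fix every line. Hence the quotient $\mathcal{C}_{ns}/\F_p^\times$, of order $(p^2 - 1)/(p-1) = p+1$, acts freely on the $(p+1)$-element set $\PP^1(\F_p)$, and therefore regularly.

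From here the three cases fall out quickly. In case (1), where $p \equiv 1 \pmod 3$ forces $G$ to be conjugate to the full $\mathcal{C}_{ns}^+ \supseteq \mathcal{C}_{ns}$, the transitivity of $\mathcal{C}_{ns}$ yields a single orbit of size $p+1$. In case (3), Theorem \ref{brp} provides $G \supseteq \mathcal{C}_{ns}^{e}$, where $e$ is the ramification index of the minimal extension of $\Q_p$ over which $E$ acquires good or multiplicative reduction; the non-additive reduction hypothesis forces $e = 1$, so again $G \supseteq \mathcal{C}_{ns}$ and transitivity gives orbit size $p+1$. In case (2), the subgroup $(\mathcal{C}_{ns}^+)^3$ has index $3$ inside $\mathcal{C}_{ns}^+$, which acts transitively on the $p+1$ lines; so by the elementary fact that an index-$n$ subgroup of a transitive group on a set of size $k$ has every orbit of size at least $k/n$, the orbit of $C$ has size at least $(p+1)/3$.

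The main obstacle, as I see it, is not the orbit computation (which is elementary once the non-split eigenvalue calculation is in place) but correctly extracting and applying Zywina's dichotomy from Proposition \ref{prop-zyw}: in particular, confirming that $(\mathcal{C}_{ns}^+)^3$ arises only for $p \equiv 2 \pmod 3$ and has index exactly $3$, so that the pigeonhole bound $(p+1)/3$ in part (2) is actually valid.
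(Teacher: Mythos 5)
Your proof is correct, and it takes a genuinely different, more self-contained route than the paper. The paper argues through a generator $P$ of $C$: it invokes the \emph{proof} of \cite[Theorem 7.3]{lozano1} to get $[\Q(P):\Q]\geq (p^2-1)/a$, where $a$ is the least integer with $\mathcal{C}_{ns}^{\,a}\subseteq\rho_{E,p}(G_\Q)$, divides by the bound $[\Q(P):\Q(C)]\leq p-1$ of Lemma \ref{ent} to obtain $[\Q(C):\Q]\geq (p+1)/a$, and then pins down $a\in\{1,3\}$ using Proposition \ref{prop-zyw} and Theorem \ref{brp} exactly as in your case analysis. You instead identify $[\Q(C):\Q]$ with the size of the orbit of the line $C$ in $\PP^1(\F_p)$ under the image of $\rho_{E,p}$, prove from scratch (via the discriminant computation $4\epsilon b^2$) that $\mathcal{C}_{ns}$ acts transitively on $\PP^1(\F_p)$, and finish with the index-$3$ pigeonhole for $(\mathcal{C}_{ns}^+)^3$. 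Your route buys independence from Lemma \ref{ent} and from the internals of an external proof (the paper needs a footnote precisely because the inequality it wants is not the stated form of \cite[Theorem 7.3]{lozano1}), and it yields the sharper conclusion $[\Q(C):\Q]=p+1$ exactly in cases (1) and (3); the paper's route is shorter to write given its references and runs parallel to the torsion-point analysis it is modeled on. Both arguments lean on Proposition \ref{prop-zyw} and Theorem \ref{brp} in the same way and carry the same implicit hypothesis $p\notin I_\Q(1)$ needed to invoke them. You also correctly read the hypothesis of part (1) as $p\equiv 1 \pmod 3$; the ``$p\equiv 2\pmod 3$'' appearing there in the statement is a typo.
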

\begin{proof}
	For an elliptic curve $E/\Q$ such that the image of $\rho_{E,p}$ is contained in the normalizer of the non-split Cartan subgroup, by the proof of \cite[Theorem 7.3]{lozano1} the field of smallest degree $\Q(P)$ over which a point $P$ of order $p$ is defined is $\geq \frac{p^2-1}{a},$ where $a$ is the smallest integer such that $\rho_{E,p}(G_\Q)$ contains an $a$-th power of $\mathcal C_{ns}$\footnote{In the statment of \cite[Theorem 7.3]{lozano1}, it says that $[\Q(P):\Q]\geq \frac{p-1}{e}$, where $e$ is as defined in Section 2, since this guarantees that $\rho_{E,p}(G_\Q)$ contains $C_{ns}^e$. But from the proof we see that it is true that $[\Q(P):\Q]\geq \frac{p^2-1}{a}$, where $a$ (which may be smaller than $e$) is the smallest integer such that $\rho_{E,p}(G_\Q)$ contains $C_{ns}^a$.}.

	On the other hand, by Lemma \ref{ent}, we have $[\Q(P):\Q(C)]\leq p-1$. Together this implies that for any $P \in E[p]$,
	$$ [\Q(C):\Q]\geq \frac{\frac{p^2-1}{a}}{p-1} \geq \frac{p+1}{a}.$$ 
By Proposition \ref{prop-zyw},
$$a=\begin{cases}1  & \text{ if } p\equiv 1 \pmod 3,\\
1 \text{ or } 3 & \text{ if }p\equiv 2 \pmod 3, \end{cases}$$ from which (1) and (2) follow.

Part (3) follows from part (1) of Theorem \ref{brp}, since by assumption we have $e=1$ and hence $\rho_{E,p}(G_\Q)$ contains $\mathcal C_{ns}$.
\end{proof}

\subsection{Proof of Theorem \ref{maintm}}
Let $p \notin I_\Q(1)$ and $d(p)$ be the minimal field of definition of a $p$-isogeny of an elliptic curve with rational $j$-invariant. 

By Propositions \ref{full} and \ref{nsp}, we have that $d(p)\geq p+1$ if $p\equiv 1 \pmod 3$ and $d(p)\geq (p+1)/3$ if $p\equiv 2 \pmod 3$.
Since for $p\notin I_\Q(1)$, we have $p\geq 19$ for $p\equiv 1 \pmod 3$ and $p\geq 23$ for $p\equiv 2 \pmod 3$, it follows that $d(p)\geq 8$ for all $p\notin I_\Q(1)$, proving the Theorem. \qed

Note that we have in the proof above in fact proved an unconditional upper bound for $I_\Q(d)$, for all integers $d$.

\begin{theorem}
\label{tm_up_bound}
For all positive integers $d$, 
$$I_\Q(d)\subseteq I_\Q(1) \cup \left\{ p \text{ prime}: p\leq d-1,\ p \equiv 1 \pmod 3 \right\} \cup \left\{ p \text{ prime}: p\leq 3d-1,\ p \equiv 2 \pmod 3 \right\}.$$
\end{theorem}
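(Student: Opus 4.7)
The plan is to observe that Theorem \ref{tm_up_bound} is essentially a contrapositive repackaging of the lower bounds on $d(p)$ that were just used in the proof of Theorem \ref{maintm}. Concretely, fix a positive integer $d$ and suppose $p \in I_\Q(d) \setminus I_\Q(1)$. By definition of $I_\Q(d)$, there is a non-CM elliptic curve $E/K$ with $j(E)\in\Q$ and a $K$-rational $p$-isogeny for some number field $K$ of degree $\leq d$. Replacing $E$ by a quadratic twist (which preserves the existence of a $p$-isogeny, as the introduction notes) we may assume $E$ is defined over $\Q$, so that Propositions \ref{full} and \ref{nsp} apply to $d(p)$.

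First I would split on the image of $\rho_{E,p}$ using Theorem \ref{brp} and Proposition \ref{prop-zyw}. If $\rho_{E,p}$ is surjective, Proposition \ref{full} gives $d(p)\geq p+1$. Otherwise $\rho_{E,p}(G_\Q)$ is contained in the normalizer of a non-split Cartan, and Proposition \ref{nsp} yields $d(p)\geq p+1$ when $p\equiv 1\pmod 3$ and $d(p)\geq (p+1)/3$ when $p\equiv 2\pmod 3$. Combining the two cases, we always have the dichotomy
\[
 d(p)\geq p+1 \text{ if } p\equiv 1 \pmod 3, \qquad d(p)\geq (p+1)/3 \text{ if } p\equiv 2\pmod 3.
\]

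Since $p\in I_\Q(d)$ forces $d(p)\leq d$, these inequalities translate into $p\leq d-1$ in the residue class $1\pmod 3$ and into $p\leq 3d-1$ in the residue class $2\pmod 3$. Therefore every prime in $I_\Q(d)\setminus I_\Q(1)$ lies in one of the two advertised finite sets, and taking the union with $I_\Q(1)$ yields the stated inclusion. There is no genuine obstacle here: the only mild subtlety is justifying the reduction to curves defined over $\Q$ (rather than merely with $\Q$-rational $j$-invariant) so that Propositions \ref{full} and \ref{nsp}, which are stated for $E/\Q$, can be invoked; this is handled by the quadratic-twist remark in the introduction, which shows that the property of admitting a $p$-isogeny depends only on $j(E)$.
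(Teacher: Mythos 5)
Your argument is correct and is essentially the paper's own proof: the paper derives Theorem \ref{tm_up_bound} as an immediate byproduct of the bounds $d(p)\geq p+1$ (for $p\equiv 1 \pmod 3$) and $d(p)\geq (p+1)/3$ (for $p\equiv 2 \pmod 3$) obtained from Propositions \ref{full} and \ref{nsp} in the proof of Theorem \ref{maintm}, exactly as you do. Your explicit handling of the reduction from curves with $\Q$-rational $j$-invariant to curves defined over $\Q$ via quadratic twists is a reasonable elaboration of a step the paper leaves implicit.
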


\section{Results assuming Serre's uniformity conjecture}
If we assume Conjecture \ref{suc}, we can prove stronger results.

\begin{theorem}
Suppose Conjecture \ref{suc} is true. Then for all positiver integers $d$,
$$I_\Q(d)=I_\Q(1) \cup \left\{ p \text{ prime}: p\leq d-1\right\}.$$
In particular $I_\Q(d)=I_\Q(1)$ for $d \leq 19$. 
\end{theorem}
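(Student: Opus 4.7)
The plan is that this theorem follows almost immediately from Proposition \ref{full} once Serre's uniformity conjecture forces every non-CM elliptic curve $E/\Q$ with $p \notin I_\Q(1)$ into the surjective-image case. So the whole argument reduces to establishing the two inclusions and then noting which small primes are on the list.

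For the containment $I_\Q(d) \subseteq I_\Q(1) \cup \{p \text{ prime}: p \leq d-1\}$, I would take $p \in I_\Q(d) \setminus I_\Q(1)$. By definition there is a non-CM elliptic curve $E$ with $j(E) \in \Q$ admitting a $p$-isogeny over some number field $K$ of degree $\leq d$. Since possessing a $p$-isogeny is a quadratic-twist invariant property (as emphasised in the introduction), I may pass to a $\Q$-model of the $j$-invariant and assume $E/\Q$. The kernel $C$ of the isogeny is then $G_K$-stable, so $\Q(C) \subseteq K$. Conjecture \ref{suc} makes $\rho_{E,p}$ surjective, and Proposition \ref{full} then gives $[\Q(C):\Q]=p+1$; hence $p+1 \leq [K:\Q] \leq d$.

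For the reverse containment, $I_\Q(1) \subseteq I_\Q(d)$ is immediate from the definition. For a prime $p \leq d-1$ with $p \notin I_\Q(1)$, I would pick any non-CM elliptic curve $E/\Q$. Under Conjecture \ref{suc}, $\rho_{E,p}$ is surjective, so Proposition \ref{full} exhibits a cyclic subgroup $C \subseteq E[p]$ whose field of definition $\Q(C)$ has degree exactly $p+1 \leq d$; the quotient isogeny $E \to E/C$ is defined over $\Q(C)$, so $p \in I_\Q(d)$.

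For the \emph{in particular} statement, I would just observe that for $d \leq 19$ the primes at most $d-1 \leq 18$ are $\{2,3,5,7,11,13,17\}$, all of which already lie in $I_\Q(1) = \{2,3,5,7,11,13,17,37\}$, so the added set is empty. I do not foresee a genuine obstacle; the only step worth articulating carefully is the twist-invariance reduction to $E/\Q$, which is what bridges Proposition \ref{full} (stated for curves over $\Q$) with the $\Q$-rational $j$-invariant hypothesis in the definition of $I_\Q(d)$.
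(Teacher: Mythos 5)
Your proposal is correct and follows essentially the same route as the paper: under Conjecture \ref{suc} the representation $\rho_{E,p}$ is surjective for $p\notin I_\Q(1)$, so Proposition \ref{full} pins the minimal degree of definition of a $p$-isogeny at exactly $p+1$, giving $p\in I_\Q(d)$ if and only if $p\le d-1$. In fact your write-up is more complete than the paper's one-line proof, since you spell out both inclusions (including the realizability of the isogeny in degree $p+1$), the quadratic-twist reduction to a $\Q$-model, and the verification that no new primes appear for $d\le 19$.
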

\begin{proof} Let $p \notin I_\Q(1)$ and $d(p)$ be the minimal field of definition of a $p$-isogeny of an elliptic curve with rational $j$-invariant. Then by assumption $\rho_{E,p}$ is surjective and by Proposition \ref{full}, $d(p)=p+1$.

\end{proof}

\end{document}